\newtheorem{theorem}{Theorem}
\newtheorem{lemma}[theorem]{Lemma}
\newtheorem{remark}[theorem]{Remark}
\newcommand{\Ref}[1]{\mbox{(\ref{#1})}}
\newcommand{\bull}{\vrule height 1.8ex width 1.0ex depth 0ex}
\newenvironment{proof}{{\bf Proof : }}{\hfill$\bull$\medskip}
\newcommand{\Dev}[2]{{\frac{\partial {#1}}{\partial {#2}}}}
\newcommand{\AEW}{\textrm{a.e.}~~}
\newcommand{\Norm}[2]{\left\|#1\right\|_{#2}}
\newcommand{\Inner}[3]{\left<#1,#2\right>_{#3}}
\newcommand{\Set}[2]{{\left\{{#1}~;~{#2}\right\}}}
\newcommand{\Cinv}{C_{\textrm{inv}}}
\title{Constructive error analysis of a full-discrete finite element method for the heat equation}
\author{
Kouji Hashimoto$^{1}$, Takuma Kimura$^{2}$, Teruya Minamoto$^{2}$, Mitsuhiro T. Nakao$^{3}$
}
\date{\small\sl 
hashimot@nakamura-u.ac.jp\\
$^{1}$Division of Infant Education, Nakamura Gakuen Junior College, Fukuoka 814-0198, Japan\\
$^{2}$Department of Information Science, Saga University, Saga 840-8502, Japan\\
$^{3}$Faculty of Science and Engineering, Waseda University, Tokyo 169-8555, Japan
}
\begin{document}
\maketitle
\begin{abstract}
In this paper, we present a new full-discrete finite element method for the heat equation, and show the numerical stability of the method by verified computations.
Since, in the error analysis, we use the constructive error estimates proposed ny Nakao et. all in 2013, this work is considered as an extention of that paper.
We emphasize that concerned scheme seems to be a quite normal Galerkin method and easy to implement for evolutionary equations comparing with previous one.
In the constructive error estimates, we effectively use the numerical computations with guaranteed accuracy.
\end{abstract}

\section{Introduction}
The purpose of this paper is to establish the constructive a priori error estimates for a full-discrete approximations $Q_h^k u$, which is defined in this section, of the solution $u$ to the following linear heat equation:
\begin{eqnarray}\label{parabolic}
\begin{array}{rclcl}
\Dev{}{t}u-\nu\Delta u&=&f&{\rm in}&\Omega\times J,\\
u(x,t)&=&0&{\rm on}&\partial\Omega\times J,\\
u(0)&=&0&{\rm in}&\Omega.
\end{array}
\end{eqnarray}
Here, $\Omega \subset \mathbb{R}^d$ $(d \in \{1,2,3\})$ is a bounded polygonal or polyhedral domain; $J:=(0,T) \subset \mathbb{R}$, (for a fixed $ T<\infty$) is a bounded open interval; the diffusion coefficient $\nu$ is a positive constant; and $f \in L^2\bigl(J;L^2(\Omega)\bigr)$, where, in general for any normed space $Y$,  we define the time-dependent Lebesgue space $L^2\bigl(J;Y \bigr)$ as a space of square integrable $Y$-valued functions on $J$. That is,   
\[
f \in L^2\bigl(J;Y \bigr) \quad \Leftrightarrow \quad \int_J ||f(t)||_Y^2 dt < \infty. 
\]
In the discussion below, we refer to the a priori estimates as {\it `constructive'} if all the constants can be numerically determined. 
\subsection{Notations}
The notations to the spaces in this paper are very similar to that presented in \cite{Nakao-SIAM}, we include these here for the sake of convenience.

We denote $L^2(\Omega)$ and $H^1(\Omega)$ as the usual Lebesgue and the first order $L^2$-Sobolev spaces on $\Omega$, respectively, and by $\Inner{u}{v}{L^2(\Omega)}:=\int_\Omega u(x)v(x)\,dx$ the natural inner product for $u$, $v\in L^2(\Omega)$. 
By considering the boundary and initial conditions, we define the following subspaces of $H^1(\Omega)$ and $H^1(J)$ as
\[ H_0^1(\Omega):=\Set{u \in H^1(\Omega)}{u = 0 \textrm{ on } \partial\Omega} \quad \mbox{and} \quad V^1(J):=\Set{u \in H^1(J)}{u(0) = 0}, \]
respectively. 
These are Hilbert spaces with inner products
\[ \Inner{u}{v}{H_0^1(\Omega)}:=\Inner{\nabla u}{\nabla v}{L^2(\Omega)^d}\quad \mbox{and} \quad \Inner{u}{v}{V^1(J)}:=\Inner{\Dev{u}{t}}{\Dev{v}{t}}{L^2(J)}. \]
Let $X(\Omega)$ be a subspace of $L^2(\Omega)$ defined by $X(\Omega):=\Set{u \in L^2(\Omega)}{\triangle u \in L^2(\Omega)}$. We define 
\[
V^1\bigl(J;L^2(\Omega)\bigr):=\Set{u \in L^2\bigl(J;L^2(\Omega)\bigr)}{\Dev{u}{t} \in L^2\bigl(J;L^2(\Omega)\bigr)\quad \mbox{and} \quad u(x,0)=0 \textrm{ in } \Omega},
\] 
with inner product $\Inner{u}{v}{V^1\bigl(J;L^2(\Omega)\bigr)}:=\Inner{\Dev{u}{t}}{\Dev{v}{t}}{L^2\bigl(J;L^2(\Omega)\bigr)}$.
In the following discussion, abbreviations like $L^2H_0^1$ for $L^2\bigl(J;H_0^1(\Omega)\bigr)$ will often be used. 
We set $V(\Omega,J):= V^1\bigl(J;L^2(\Omega)\bigr) \cap L^2\bigl(J;H_0^1(\Omega)\bigr)$.
Moreover, we denote the partial differential operator $\triangle_t:V(\Omega,J) \cap L^2\bigl(J;X(\Omega)\bigr) \to L^2\bigl(J;L^2(\Omega)\bigr)$ by $\triangle_t:=\Dev{}{t}-\nu\triangle$.

Now let $S_h(\Omega)$ be a finite-dimensional subspace of $H_0^1(\Omega)$ dependent on the parameter $h$.
For example, $S_h(\Omega)$ is considered to be a finite element space with mesh size $h$.
Let $n$ be the degree of freedom for $S_h(\Omega)$, and let $\{\phi_i\}_{i=1}^n \subset H_0^1(\Omega)$ be the basis of $S_h(\Omega)$.
Similarly, let $V_k^1(J)$ be an approximation subspace of $V^1(J)$ dependent on the parameter $k$. 
Let $m$ be the degree of freedom for $V_k^1(J)$, and let $\{\psi_i\}_{i=1}^m \subset V_k^1(J)$ be the basis of $V_k^1(J)$.
Let $V^1\bigl(J;S_h(\Omega)\bigr)$ be a subspace of $V(\Omega,J)$ corresponding to the semidiscretized approximation in the spatial direction.
We define the $H_0^1$-projection $P_h^1 u \in  S_h(\Omega)$ of any element $u \in H_0^1(\Omega)$ by the following variational equation:
\begin{align}
  \Inner{\nabla (u-P_h^1u)}{\nabla v_h}{L^2(\Omega)^d} = 0, \quad \forall v_h \in S_h(\Omega). \label{eq:IPP:Ph1 def}
\end{align}
Similarly,  for any element $u \in V^1(J)$, the $V^1$-projection $P^k_1:V^1(J) \to V_k^1(J)$ is defined by follows:
\begin{eqnarray*}
  \Inner{\Dev{}{t}(u-P_1^ku)}{\Dev{}{t} v_k}{L^2(J)} = 0, \quad \forall v_k \in V_k^1(J).
\end{eqnarray*}

Now let $\Pi_k:V^1(J) \to V_k^1(J)$ be an interpolation operator.
Namely, if the nodal points of $J$ are given by $0=t_0<t_1<\cdots<t_m=T$, then for an arbitrary $u \in V^1(J)$, the interpolation $\Pi_ku$ is defined as the function in $V_k^1(J)$ satisfying:
\begin{align}
  u(t_i) = \bigl(\Pi_ku\bigr)(t_i), \quad \forall i \in \{1,\ldots,m\}. \label{eq:IPP:Pik def}
\end{align}
We know that there exist constants $C_\Omega(h)>0$, $C_J(k)>0$ and $\Cinv(h)>0$ satisfying
\begin{eqnarray*}
  \Norm{u-P_h^1u}{H_0^1(\Omega)}
    &\leq &C_\Omega(h)\Norm{\triangle u}{L^2(\Omega)}, \quad \forall u\in H_0^1(\Omega) \cap X(\Omega), \\
  \Norm{u-\Pi_ku}{L^2(J)}
    &\leq& C_J(k)\Norm{u}{V^1(J)}, \quad \forall u\in V^1(J),\\
  \Norm{u_h}{H_0^1(\Omega)}
    &\leq& \Cinv(h)\Norm{u_h}{L^2(\Omega)}, \quad \forall u_h\in S_h(\Omega).
\end{eqnarray*}
Moreover, there exists a Poincar\'e constant $C_p>0$  satisfying
\begin{eqnarray*}
  \Norm{u}{L^2(\Omega)}
    &\leq& C_p\Norm{u}{H^1_0(\Omega)}, \quad \forall u\in H^1_0(\Omega).
\end{eqnarray*}
For example, if $\Omega$ is a bounded open rectangular domain in $\mathbb{R}^d$, and $S_h(\Omega)$ is the piecewise linear (P1) finite element space, then they can be taken by $C_\Omega(h)=\frac{h}{\pi}$ (see, e.g., \cite{Nakao 1998 best}) and $\Cinv(h)=\frac{\sqrt{12}}{h_{\min}}$, where $h_{\min}$ is the minimum mesh size for $\Omega$ (see, e.g., \cite[Theorem 1.5]{Schultz 1973}).
Moreover, if $V_k^1(J)$ is the P1-finite element space, then it can be taken by $C_J(k)=\frac{k}{\pi}$ (see, e.g., \cite[Theorem 2.4]{Schultz 1973}).

From \cite[Lemma 2.2]{Kinoshita 2011}, if $V_k^1(J)$ is P1-finite element space (i.e., the basis functions $\psi_i$ are piecewise linear functions), then $P^k_1$ coincides with $\Pi_k$. For any element  $u \in V(\Omega,J)$, we define the semidiscrete projection $P_hu \in V^1\bigl(J;S_h(\Omega)\bigr)$ by the following weak form:
\begin{align}
  \Inner{\Dev{}{t}\bigl(u(t)-P_hu(t)\bigr)}{v_h}{L^2(\Omega)} + \nu\Inner{\nabla \bigl(u(t)-P_hu(t)\bigr)}{\nabla v_h}{L^2(\Omega)^d} = 0, \label{eq:IPP:Ph def} \\ \quad \forall v_h \in S_h(\Omega),~~\AEW t \in J, \nonumber 
\end{align}
where $\AEW$means an abbreviation for 'almost everywhere'.

Finally, the space $S_h^k(\Omega,J)$ is defined as the tensor product $V_k^1(J) \otimes S_h(\Omega)$, which corresponds to a full discretization.
Moreover, we define the full-discretization operator $P_h^k:V(\Omega,J) \to S_h^k(\Omega,J)$ by $P_h^k:=\Pi_kP_h$.
In addition, we denote the matrix norm induced from the Euclidean 2-norm by $\|\cdot\|_{E}$ and denote the transposed matrix of the matrix ${\bf X}$ by ${\bf X}^{\rm T}$.

We show known results for the equation \Ref{parabolic} below.
\begin{theorem}(see Theorem 5.5, 5.6, and proof of Theorem 4.6 in \cite{Nakao-SIAM})\label{error-estimate-base}
Let $u\in V(\Omega,J) \cap L^2\bigl(J;X(\Omega)\bigr)$ be a solution of \Ref{parabolic} for $f\in L^2\bigl(J;X(\Omega)\bigr)$.
Then, we have the following estimations.
\begin{eqnarray}
  \Norm{u-P_h^ku}{L^2\bigl(J;H_0^1(\Omega)\bigr)}
    &\leq& C_1(h,k)\Norm{f}{L^2\bigl(J;L^2(\Omega)\bigr)},\label{C1hk}\\
  \Norm{u-P_h^ku}{L^2\bigl(J;L^2(\Omega)\bigr)}
    &\leq& C_0(h,k)\Norm{f}{L^2\bigl(J;L^2(\Omega)\bigr)},\label{C0hk}\\
  \Norm{u(T)-P_hu(T)}{L^2(\Omega)}
    &\leq& c_0(h)\Norm{f}{L^2\bigl(J;L^2(\Omega)\bigr)},\label{c0hk}
\end{eqnarray}
where $C_1(h,k):=\frac{2}{\nu}C_\Omega(h) + \Cinv(h)C_J(k)$, $C_0(h,k)=\frac{8}{\nu}C_\Omega(h)^2+C_J(k)$ and $c_0(h)=\sqrt{\frac{8}{\nu}}C_\Omega(h)$.
\end{theorem}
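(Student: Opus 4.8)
Although the statement is quoted from \cite{Nakao-SIAM}, the natural way to reconstruct it is to exploit the factorization $P_h^k = \Pi_k P_h$ and split the full-discrete error into a spatial (semidiscrete) part and a temporal part. Writing
\[
u - P_h^k u = (u - P_h u) + (P_h u - \Pi_k P_h u) = (u - P_h u) + (I - \Pi_k)P_h u,
\]
the triangle inequality reduces each of \Ref{C1hk} and \Ref{C0hk} to a semidiscrete estimate for $u - P_h u$ plus a temporal interpolation estimate for $(I - \Pi_k)P_h u$. The endpoint bound \Ref{c0hk} involves only the semidiscrete term, so it follows the moment that term is controlled.

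For the semidiscrete contribution I would invoke the cited estimates, which supply $\Norm{u - P_h u}{L^2H_0^1} \le \frac{2}{\nu}C_\Omega(h)\Norm{f}{L^2L^2}$, $\Norm{u - P_h u}{L^2L^2} \le \frac{8}{\nu}C_\Omega(h)^2\Norm{f}{L^2L^2}$, and $\Norm{u(T) - P_h u(T)}{L^2(\Omega)} \le \sqrt{\frac{8}{\nu}}C_\Omega(h)\Norm{f}{L^2L^2}$. These account for all the $C_\Omega$ terms in the stated constants; the quadratic factor $C_\Omega(h)^2$ in $C_0(h,k)$ is the signature of the usual duality (Aubin--Nitsche) argument hidden in the cited results, which is the one genuinely nontrivial ingredient and which I would not re-derive here.

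The remaining constants $\Cinv(h)C_J(k)$ and $C_J(k)$ come from the temporal term, for which the key preliminary is the stability bound $\Norm{\Dev{}{t}P_h u}{L^2L^2} \le \Norm{f}{L^2L^2}$. To obtain it, note that subtracting the weak form of \Ref{parabolic} from \Ref{eq:IPP:Ph def} shows $P_h u$ is the standard semidiscrete Galerkin solution, i.e. $\Inner{\Dev{}{t}P_h u}{v_h}{L^2(\Omega)} + \nu\Inner{\nabla P_h u}{\nabla v_h}{L^2(\Omega)^d} = \Inner{f}{v_h}{L^2(\Omega)}$ for all $v_h \in S_h(\Omega)$. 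Testing with $v_h = \Dev{}{t}P_h u$, using $\frac{d}{dt}\Norm{P_h u}{H_0^1(\Omega)}^2 = 2\Inner{\nabla P_h u}{\nabla \Dev{}{t}P_h u}{L^2(\Omega)^d}$ together with Young's inequality, then integrating over $J$ and discarding the nonnegative endpoint term $\frac{\nu}{2}\Norm{P_h u(T)}{H_0^1(\Omega)}^2$ (the initial term vanishes because $P_h u \in V^1\bigl(J;S_h(\Omega)\bigr)$ forces $P_h u(0)=0$) yields the claim. Applying the temporal interpolation inequality to the $L^2(\Omega)$-valued function $P_h u$ then gives $\Norm{(I - \Pi_k)P_h u}{L^2L^2} \le C_J(k)\Norm{\Dev{}{t}P_h u}{L^2L^2} \le C_J(k)\Norm{f}{L^2L^2}$, the $C_J(k)$ term of \Ref{C0hk}. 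Since $(I - \Pi_k)P_h u(t) \in S_h(\Omega)$ for a.e. $t$ (the nodal values $P_h u(t_i)$ lie in $S_h(\Omega)$ and $\Pi_k$ combines them with scalar temporal basis functions), the inverse inequality applies pointwise in time and, after integration over $J$, gives $\Norm{(I - \Pi_k)P_h u}{L^2H_0^1} \le \Cinv(h)\Norm{(I - \Pi_k)P_h u}{L^2L^2} \le \Cinv(h)C_J(k)\Norm{f}{L^2L^2}$, the $\Cinv(h)C_J(k)$ term of \Ref{C1hk}. Adding the spatial and temporal contributions in each norm then produces the constants $C_1(h,k)$ and $C_0(h,k)$.

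The main obstacle I anticipate is the careful handling of the temporal interpolation operator on vector-valued functions: the stated bound with constant $C_J(k)$ is phrased for scalar $u \in V^1(J)$, so it must be justified for the $S_h(\Omega)$-valued function $P_h u$, either by expanding $P_h u$ in the spatial basis and applying the scalar estimate coefficientwise or by appealing to the tensor-product structure of $S_h^k(\Omega,J)$. One must likewise confirm that $\Pi_k$ preserves membership in $S_h(\Omega)$ at each time so that the inverse inequality is legitimately available; once these two points are secured, the assembly above is routine.
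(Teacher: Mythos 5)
The paper offers no proof of this theorem: it is imported verbatim from \cite{Nakao-SIAM} (the parenthetical ``see Theorem 5.5, 5.6, and proof of Theorem 4.6'' is the entire justification), so there is no in-house argument to compare yours against. That said, your reconstruction is correct and is in substance the argument underlying the cited result. The splitting $u-P_h^ku=(u-P_hu)+(I-\Pi_k)P_hu$ accounts exactly for the two summands in each of $C_1(h,k)$ and $C_0(h,k)$, and you are right that \Ref{c0hk} concerns only the semidiscrete projection at $t=T$ (elsewhere the paper notes $P_h^ku(T)=\Pi_kP_hu(T)=P_hu(T)$ since $T$ is a node, so the two readings agree). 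Your derivation of the stability bound $\Norm{\Dev{}{t}P_hu}{L^2L^2}\le\Norm{f}{L^2L^2}$ by testing the semidiscrete Galerkin equation with $\Dev{}{t}P_hu$ and discarding the endpoint term is sound, and it is precisely the fact the paper later borrows from \cite{Nakao-JCAM} in the proof of Theorem \ref{thm:error}; the extension of the scalar interpolation estimate to the $S_h(\Omega)$-valued function $P_hu$ and the pointwise-in-time use of the inverse inequality are both legitimate for the reasons you give (for the vector-valued interpolation bound, expand in an $L^2(\Omega)$-orthonormal basis of $S_h(\Omega)$ rather than the nodal basis, since the latter is not orthonormal). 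The only content you do not supply is the trio of semidiscrete estimates for $u-P_hu$, in particular the $O\bigl(C_\Omega(h)^2\bigr)$ bound obtained by duality; but since the theorem itself is stated as a citation, deferring those to \cite{Nakao-SIAM} is exactly what the paper does, and no genuine gap results.
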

\subsection{The full-discrete finite element method}
We define the bi-linear form $a_0(\cdot,\cdot)$ by
\begin{eqnarray*}
a_0(\phi,\psi)&:=&
\Inner{\Dev{}{t}\phi}{\Dev{}{t}\psi}{L^2\bigl(J;L^2(\Omega)\bigr)}
+\nu\Inner{\nabla\phi}{\Dev{}{t}\nabla\psi}{L^2\bigl(J;L^2(\Omega)\bigr)^d},
\end{eqnarray*}
for $\phi, \psi\in V(\Omega,J)$.
Then, for any element  $u \in V(\Omega,J)$, we define the full-discrete projection $Q_h^ku \in S_h^k(\Omega,J)$ by the following weak form:
\begin{eqnarray*}
  a_0(u-Q_h^ku,v_h^k) = 0, \quad \forall v_h^k \in S_h^k(\Omega,J).
\end{eqnarray*}

First, we define the full-discrete finite element approximation $Q_h^ku \in S_h^k(\Omega,J)$ for \Ref{parabolic} by
\begin{align}
  a_0(Q_h^ku,v_h^k) = \Inner{f}{\Dev{}{t}v_h^k}{L^2\bigl(J;L^2(\Omega)\bigr)}, \quad \forall v_h^k \in S_h^k(\Omega,J).\label{appQ} 
\end{align}
We now have the following estimation from above definition.
Note that the scheme in \cite{Nakao-SIAM} is based on the finite element Galerkin method with an interpolation in time that uses the fundamental solution for semidiscretization in space.
Since, in the derivation procedure, it uses the fundamental matrix of solution for ODEs associated with the semidiscrete approximation, it is necessary to implement the complicated verified computations on matrix functions.
But the present scheme by \Ref{appQ}  need not any such kind of procedures at all.
\begin{lemma}
Let $u\in V(\Omega,J) \cap L^2\bigl(J;X(\Omega)\bigr)$ be a solution of \Ref{parabolic} for $f\in L^2\bigl(J;L^2(\Omega)\bigr)$.
Then, the full-discrete projection $Q_h^ku \in S_h^k(\Omega,J)$ satisfies the following $V^1$-stability: 
\begin{eqnarray*}
  \Norm{Q_h^ku}{V^1\bigl(J;L^2(\Omega)\bigr)}
    &\leq&\Norm{f}{L^2\bigl(J;L^2(\Omega)\bigr)}.
\end{eqnarray*}
\end{lemma}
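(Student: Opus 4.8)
The plan is to test the defining relation \Ref{appQ} against $Q_h^ku$ itself and to exploit the special structure of the bilinear form $a_0$. Writing $w:=Q_h^ku\in S_h^k(\Omega,J)$ and choosing $v_h^k=w$ in \Ref{appQ}, I obtain
\[
a_0(w,w)=\Inner{f}{\Dev{}{t}w}{L^2\bigl(J;L^2(\Omega)\bigr)}.
\]
The first summand of $a_0(w,w)$ is exactly $\Norm{w}{V^1\bigl(J;L^2(\Omega)\bigr)}^2$ by the definition of the $V^1$-inner product, so the whole argument reduces to controlling the second, cross term $\nu\Inner{\nabla w}{\Dev{}{t}\nabla w}{L^2\bigl(J;L^2(\Omega)\bigr)^d}$.

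The key step is to recognize this cross term as a perfect time derivative. Using the identity $\nabla w\cdot\Dev{}{t}\nabla w=\tfrac12\Dev{}{t}|\nabla w|^2$ and integrating first in $t$ over $J=(0,T)$, I would rewrite it as $\tfrac{\nu}{2}\int_\Omega\bigl(|\nabla w(T)|^2-|\nabla w(0)|^2\bigr)\,dx$. Since $w$ lies in the tensor-product space $S_h^k(\Omega,J)=V_k^1(J)\otimes S_h(\Omega)$ and every element of $V_k^1(J)\subset V^1(J)$ vanishes at $t=0$, we have $w(\cdot,0)=0$ in $\Omega$, and hence $\nabla w(\cdot,0)=0$. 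Therefore the cross term equals $\tfrac{\nu}{2}\Norm{\nabla w(T)}{L^2(\Omega)^d}^2\ge0$, which yields the lower bound $a_0(w,w)\ge\Norm{w}{V^1\bigl(J;L^2(\Omega)\bigr)}^2$.

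Finally, I would bound the right-hand side by Cauchy--Schwarz, namely $\Inner{f}{\Dev{}{t}w}{L^2\bigl(J;L^2(\Omega)\bigr)}\le\Norm{f}{L^2\bigl(J;L^2(\Omega)\bigr)}\Norm{w}{V^1\bigl(J;L^2(\Omega)\bigr)}$, combine it with the lower bound above, and divide by $\Norm{w}{V^1\bigl(J;L^2(\Omega)\bigr)}$ (the case $w=0$ being trivial) to conclude. I expect the main obstacle to be the rigorous justification of the time integration that converts the cross term into the boundary value $|\nabla w(T)|^2$; this requires $t\mapsto|\nabla w(t)|^2$ to be absolutely continuous, which holds because $w=\sum_j\alpha_j(t)\phi_j(x)$ with coefficients $\alpha_j\in V_k^1(J)\subset H^1(J)$, so $|\nabla w|^2$ is a finite sum of products of $H^1(J)$ functions. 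The fundamental theorem of calculus then applies and the initial condition $w(\cdot,0)=0$ can be invoked, making the sign of the cross term unconditional.
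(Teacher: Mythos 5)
Your proposal is correct and follows essentially the same route as the paper: test \Ref{appQ} with $v_h^k=Q_h^ku$, identify the cross term as $\tfrac{\nu}{2}\Norm{Q_h^ku(T)}{H_0^1(\Omega)}^2\ge 0$ using $Q_h^ku(\cdot,0)=0$, and conclude by Cauchy--Schwarz. The only difference is that you spell out the absolute-continuity justification for the time integration, which the paper leaves implicit.
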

\begin{proof}
From \Ref{appQ} and $Q_h^ku(x,0)=0$ in $\Omega$, if we take $v_h^k=Q_h^ku$ then it follows that
\begin{eqnarray*}
\Norm{Q_h^ku}{V^1\bigl(J;L^2(\Omega)\bigr)}^2 + \frac{\nu}{2}\Norm{Q_h^ku(T)}{H^1_0(\Omega)}^2
&=&
a_0(Q_h^ku,Q_h^ku)\\
&=&   
\Inner{f}{\Dev{}{t}Q_h^ku}{L^2\bigl(J;L^2(\Omega)\bigr)}\\
&\le&   
\Norm{f}{L^2\bigl(J;L^2(\Omega)\bigr)}\Norm{Q_h^ku}{V^1\bigl(J;L^2(\Omega)\bigr)}.
\end{eqnarray*}
Therefore, the proof is completed.
\end{proof}

Next, we consider the estimation $\Norm{Q_h^ku}{L^2\bigl(J;H^1_0(\Omega)\bigr)}$ for $u \in V(\Omega,J) \cap L^2\bigl(J;X(\Omega)\bigr)$.
We now define $\alpha_h^k\in S_h^k(\Omega,J)$ satisfying $\Inner{\Dev{}{t}\alpha_h^k}{\Dev{}{t}v_h^k}{L^2\bigl(J;L^2(\Omega)\bigr)}=\Inner{f}{\Dev{}{t}v_h^k}{L^2\bigl(J;L^2(\Omega)\bigr)}$ for  all $v_h^k \in S_h^k(\Omega,J)$.
Note that by taking $v_h^k=\alpha_h^k$, it follows that $\Norm{\alpha_h^k}{V^1\bigl(J;L^2(\Omega)\bigr)}\le\Norm{f}{L^2\bigl(J;L^2(\Omega)\bigr)}$, which also implies the unique existence of $\alpha_h^k$.
We now define the matrices ${\bf A}$ and ${\bf M}$ in $\mathbb{R}^{mn \times mn}$ by 
$$
{\bf A}_{i,j}:=\Inner{\Dev{}{t}\varphi_j}{\Dev{}{t}\varphi_i}{L^2\bigl(J;L^2(\Omega)\bigr)},\quad 
{\bf M}_{i,j}:=\Inner{\nabla\varphi_j}{\nabla\varphi_i}{L^2\bigl(J;L^2(\Omega)\bigr)^d},\quad
\forall i,j \in \{1,\ldots,mn\},
$$
respectively.
Since matrices ${\bf A}$ and ${\bf M}$ are symmetric and positive definite, we can denote the Cholesky decomposition as ${\bf A}={\bf A}^{\frac{1}{2}}{\bf A}^{\frac{\rm T}{2}}$ and ${\bf M}={\bf M}^{\frac{1}{2}}{\bf M}^{\frac{\rm T}{2}}$, respectively.
Moreover, we define the matrix ${\bf B}$ in $\mathbb{R}^{mn \times mn}$ by
$$
{\bf B}_{i,j}:=\Inner{\nabla\varphi_j}{\Dev{}{t}\nabla\varphi_i}{L^2\bigl(J;L^2(\Omega)\bigr)^d},\quad\forall i,j \in \{1,\ldots,mn\}.
$$
From the fact that $Q_h^ku$ and $\alpha_h^k$ belong to $S_h^k(\Omega,J)$, there exist coefficient vectors $\mathfrak{u}:=(\mathfrak{u}_1,\ldots,\mathfrak{u}_{mn})^{\rm T}$ and $\mathfrak{a}:=(\mathfrak{a}_1,\ldots,\mathfrak{a}_{mn})^{\rm T}$ in $\mathbb{R}^{mn}$ such that
$Q_h^ku= \sum_{i=1}^{mn} \mathfrak{u}_i\varphi_i = \varphi^{\rm T}\mathfrak{u}$ and $\alpha_h^k= \sum_{i=1}^{mn} \mathfrak{a}_i\varphi_i = \varphi^{\rm T}\mathfrak{a}$
where $\varphi := (\varphi_1,\ldots,\varphi_{mn})^{\rm T}$.
Then, the equation \Ref {appQ} is equivalent to the following.
\begin{eqnarray}\label{Mat-eq}
({\bf A}+\nu {\bf B})\mathfrak{u}={\bf A}\mathfrak{a}.
\end{eqnarray}
Thus we have the following result.
\begin{lemma}
Let $u\in V(\Omega,J) \cap L^2\bigl(J;X(\Omega)\bigr)$ be a solution of \Ref{parabolic} for $f\in L^2\bigl(J;L^2(\Omega)\bigr)$.
Then, the full-discrete projection $Q_h^ku \in S_h^k(\Omega,J)$ is bounded as
\begin{eqnarray*}
  \Norm{Q_h^ku}{L^2\bigl(J;H^1_0(\Omega)\bigr)}
    \leq\eta\Norm{f}{L^2\bigl(J;L^2(\Omega)\bigr)},
\end{eqnarray*}
where $\eta:=\|{\bf M}^{\frac{\rm T}{2}}({\bf A}+\nu {\bf B})^{-1}{\bf A}^{\frac{1}{2}}\|_E$.
\end{lemma}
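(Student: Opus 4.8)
The plan is to push the continuous $L^2 H_0^1$-norm on the left-hand side down to a Euclidean expression in the coefficient vector $\mathfrak{u}$, and then to propagate the matrix identity \Ref{Mat-eq} together with the $V^1$-bound on $\alpha_h^k$ that was recorded just before it. Concretely, I would work entirely on the algebraic side, since every continuous quantity here has an exact matrix representation in the basis $\varphi$.

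First I would rewrite the target norm as a quadratic form. Because $\Inner{u}{v}{H_0^1(\Omega)}=\Inner{\nabla u}{\nabla v}{L^2(\Omega)^d}$ and $Q_h^ku=\varphi^{\rm T}\mathfrak{u}$, expanding and using the definition of ${\bf M}$ (together with its symmetry) gives
\[
  \Norm{Q_h^ku}{L^2\bigl(J;H_0^1(\Omega)\bigr)}^2 = \Inner{\nabla Q_h^ku}{\nabla Q_h^ku}{L^2\bigl(J;L^2(\Omega)\bigr)^d} = \mathfrak{u}^{\rm T}{\bf M}\mathfrak{u}.
\]
Invoking the Cholesky factorization ${\bf M}={\bf M}^{\frac{1}{2}}{\bf M}^{\frac{\rm T}{2}}$ identifies the right-hand side with the squared Euclidean vector norm $\Norm{{\bf M}^{\frac{\rm T}{2}}\mathfrak{u}}{2}^2$, so that $\Norm{Q_h^ku}{L^2(J;H_0^1(\Omega))}=\Norm{{\bf M}^{\frac{\rm T}{2}}\mathfrak{u}}{2}$.

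Next I would substitute the matrix relation. From \Ref{Mat-eq} we have $\mathfrak{u}=({\bf A}+\nu{\bf B})^{-1}{\bf A}\mathfrak{a}$, where invertibility of ${\bf A}+\nu{\bf B}$ follows from the coercivity $a_0(\phi,\phi)\ge\Norm{\phi}{V^1(J;L^2(\Omega))}^2$ already exploited in the preceding lemma. Splitting ${\bf A}={\bf A}^{\frac{1}{2}}{\bf A}^{\frac{\rm T}{2}}$ then isolates precisely the operator appearing in $\eta$:
\[
  {\bf M}^{\frac{\rm T}{2}}\mathfrak{u} = {\bf M}^{\frac{\rm T}{2}}({\bf A}+\nu{\bf B})^{-1}{\bf A}^{\frac{1}{2}}\bigl({\bf A}^{\frac{\rm T}{2}}\mathfrak{a}\bigr),
\]
and the induced $2$-norm estimate yields $\Norm{{\bf M}^{\frac{\rm T}{2}}\mathfrak{u}}{2}\le\eta\,\Norm{{\bf A}^{\frac{\rm T}{2}}\mathfrak{a}}{2}$. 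Finally I would recognize the remaining factor as the $V^1$-norm of $\alpha_h^k$: by the definition of ${\bf A}$, $\Norm{{\bf A}^{\frac{\rm T}{2}}\mathfrak{a}}{2}^2=\mathfrak{a}^{\rm T}{\bf A}\mathfrak{a}=\Norm{\alpha_h^k}{V^1(J;L^2(\Omega))}^2$, and the a priori bound $\Norm{\alpha_h^k}{V^1(J;L^2(\Omega))}\le\Norm{f}{L^2(J;L^2(\Omega))}$ noted just before \Ref{Mat-eq} closes the chain.

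The computations here are routine, so I do not expect a conceptual obstacle; the genuine care lies in the bookkeeping. I would be careful to match the index convention in the definitions of ${\bf A}$, ${\bf M}$, and ${\bf B}$ so that the correct transposes and Cholesky factors line up, and to place the split of ${\bf A}$ exactly where it isolates the matrix defining $\eta$ rather than some conjugate of it. The one point deserving an explicit word is the invertibility of ${\bf A}+\nu{\bf B}$: since ${\bf B}$ is not symmetric, this must be justified through positive-definiteness of the full bilinear form $a_0$ and not of ${\bf B}$ in isolation.
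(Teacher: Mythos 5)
Your proposal is correct and follows essentially the same route as the paper: express the $L^2H_0^1$-norm as $\|{\bf M}^{\frac{\rm T}{2}}\mathfrak{u}\|_E$, substitute $\mathfrak{u}=({\bf A}+\nu{\bf B})^{-1}{\bf A}\mathfrak{a}$ from \Ref{Mat-eq}, split ${\bf A}={\bf A}^{\frac{1}{2}}{\bf A}^{\frac{\rm T}{2}}$, and close with $\|{\bf A}^{\frac{\rm T}{2}}\mathfrak{a}\|_E=\Norm{\alpha_h^k}{V^1\bigl(J;L^2(\Omega)\bigr)}\le\Norm{f}{L^2\bigl(J;L^2(\Omega)\bigr)}$. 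Your added remarks on the invertibility of ${\bf A}+\nu{\bf B}$ are a reasonable supplement the paper leaves implicit.
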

\begin{proof}
From \Ref{Mat-eq}, we obtain
\begin{eqnarray*}
\Norm{Q_h^ku}{L^2\bigl(J;H^1_0(\Omega)\bigr)}=\|{\bf M}^{\frac{\rm T}{2}}\mathfrak{u}\|_E
&=&\|{\bf M}^{\frac{\rm T}{2}}({\bf A}+\nu {\bf B})^{-1}{\bf A}\mathfrak{a}\|_E\\
&\le&\|{\bf M}^{\frac{\rm T}{2}}({\bf A}+\nu {\bf B})^{-1}{\bf A}^{\frac{1}{2}}\|_E\|{\bf A}^{\frac{\rm T}{2}}\mathfrak{a}\|_E\\
&=&\eta\Norm{\alpha_h^k}{V^1\bigl(J;L^2(\Omega)\bigr)}\\
&\le&\eta\Norm{f}{L^2\bigl(J;L^2(\Omega)\bigr)}.
\end{eqnarray*}
Therefore, the proof is completed.
\end{proof}

Now, in order to compare our scheme with another one presented in \cite{Nakao-JCAM}, we give some arguments below.
By using $S_h^k(\Omega,J)$, we define the rather simple and natural looking full-discrete finite element scheme in both directions for the problem \Ref{parabolic}.
First, we define the bi-linear form $\hat{a}_0(\cdot,\cdot)$ by
\begin{eqnarray*}
\hat{a}_0(\phi,\psi)&:=&
\Inner{\Dev{}{t}\phi}{\psi}{L^2\bigl(J;L^2(\Omega)\bigr)}
+\nu\Inner{\nabla\phi}{\nabla\psi}{L^2\bigl(J;L^2(\Omega)\bigr)^d},
\end{eqnarray*}
for $\phi, \psi\in V(\Omega,J)$.
Then, for any element  $u \in V(\Omega,J)$, we define the full-discrete projection $\hat{Q}_h^ku \in S_h^k(\Omega,J)$ by the following weak form:
\begin{eqnarray*}
\hat{a}_0(u-\hat{Q}_h^ku,v_h^k) = 0, \quad \forall v_h^k \in S_h^k(\Omega,J).
\end{eqnarray*}
Using above, we define the full-discrete finite element approximation $\hat{Q}_h^ku \in S_h^k(\Omega,J)$ for \Ref{parabolic} by
\begin{align}
  \hat{a}_0(\hat{Q}_h^ku,v_h^k) = \Inner{f}{v_h^k}{L^2\bigl(J;L^2(\Omega)\bigr)}, \quad \forall v_h^k \in S_h^k(\Omega,J).\label{hatQ} 
\end{align}

Let $u\in V(\Omega,J) \cap L^2\bigl(J;X(\Omega)\bigr)$ be a solution of \Ref{parabolic} for $f\in L^2\bigl(J;L^2(\Omega)\bigr)$
Then, by taking $v_h^k=\hat{Q}_h^ku$, we can obtain
\begin{eqnarray*}
\frac{1}{2}\Norm{\hat{Q}_h^ku(T)}{L^2(\Omega)}^2 + \nu\Norm{\hat{Q}_h^ku}{L^2\bigl(J;H^1_0(\Omega)\bigr)}^2
&=&
\hat{a}_0(\hat{Q}_h^ku,\hat{Q}_h^ku)\\
&=&
\Inner{f}{\hat{Q}_h^ku}{L^2\bigl(J;L^2(\Omega)\bigr)}\\
&\le&   
\Norm{f}{L^2\bigl(J;L^2(\Omega)\bigr)}\Norm{\hat{Q}_h^ku}{L^2\bigl(J;L^2(\Omega)\bigr)}\\
&\le&   
C_p\Norm{f}{L^2\bigl(J;L^2(\Omega)\bigr)}\Norm{\hat{Q}_h^ku}{L^2\bigl(J;H^1_0(\Omega)\bigr)}.
\end{eqnarray*}
Thus we have the following $L^2H^1_0$-stability:
\begin{eqnarray*}
\Norm{\hat{Q}_h^ku}{L^2\bigl(J;H^1_0(\Omega)\bigr)}
\le   
\frac{C_p}{\nu}\Norm{f}{L^2\bigl(J;L^2(\Omega)\bigr)}.
\end{eqnarray*}
Moreover, we consider the estimation $\Norm{\hat{Q}_h^ku}{V^1\bigl(J;L^2(\Omega)\bigr)}$ for $u \in V(\Omega,J) \cap L^2\bigl(J;X(\Omega)\bigr)$.
Now we define $\hat{\alpha}_h^k\in S_h^k(\Omega,J)$ satisfying $\Inner{\hat{\alpha}_h^k}{v_h^k}{L^2\bigl(J;L^2(\Omega)\bigr)}=\Inner{f}{v_h^k}{L^2\bigl(J;L^2(\Omega)\bigr)}$ for  all $v_h^k \in S_h^k(\Omega,J)$.
Note that by taking $v_h^k=\hat{\alpha}_h^k$, it follows that $\Norm{\hat{\alpha}_h^k}{L^2\bigl(J;L^2(\Omega)\bigr)}\le\Norm{f}{L^2\bigl(J;L^2(\Omega)\bigr)}$.
Moreover, from \Ref{hatQ}, we obtain
\begin{eqnarray}\label{mat-unstable}
  \hat{a}_0(\hat{Q}_h^ku,v_h^k)
=\Inner{\hat{\alpha}_h^k}{v_h^k}{L^2\bigl(J;L^2(\Omega)\bigr)}.
\end{eqnarray}
We now define the matrices ${\bf G}$ and ${\bf U}$ in $\mathbb{R}^{mn \times mn}$ by 
$$
{\bf G}_{i,j}:=\Inner{\Dev{}{t}\varphi_j}{\varphi_i}{L^2\bigl(J;L^2(\Omega)\bigr)},\quad 
{\bf U}_{i,j}:=\Inner{\varphi_j}{\varphi_i}{L^2\bigl(J;L^2(\Omega)\bigr)^d},\quad
\forall i,j \in \{1,\ldots,mn\},
$$
respectively.
Since the matrix ${\bf U}$ is symmetric and positive definite, we denote the Cholesky decomposition as ${\bf U}={\bf U}^{\frac{1}{2}}{\bf U}^{\frac{\rm T}{2}}$.
From the fact that $\hat{Q}_h^ku$ and $\hat{\alpha}_h^k$ in $S_h^k(\Omega,J)$, there exist coefficient vectors $\hat{\mathfrak{u}}:=(\hat{\mathfrak{u}}_1,\ldots,\hat{\mathfrak{u}}_{mn})^{\rm T}$ and $\hat{\mathfrak{a}}:=(\hat{\mathfrak{a}}_1,\ldots,\hat{\mathfrak{a}}_{mn})^{\rm T}$ in $\mathbb{R}^{mn}$ such that
$\hat{Q}_h^ku= \sum_{i=1}^{mn} \hat{\mathfrak{u}}_i\varphi_i = \varphi^{\rm T}\hat{\mathfrak{u}}$ and $\hat{\alpha}_h^k= \sum_{i=1}^{mn} \hat{\mathfrak{a}}_i\varphi_i = \varphi^{\rm T}\hat{\mathfrak{a}}$, respectively.
Then, the variational equation \Ref {mat-unstable} is equivalent to the following.
\begin{eqnarray*}
({\bf G}+\nu {\bf M})\hat{\mathfrak{u}}={\bf U}\hat{\alpha}.
\end{eqnarray*}
Letting $\hat{\eta}:=\|{\bf A}^{\frac{\rm T}{2}}({\bf G}+\nu {\bf M})^{-1}{\bf U}^{\frac{1}{2}}\|_E$, it follows that
\begin{eqnarray*}
\Norm{\hat{Q}_h^ku}{V^1\bigl(J;L^2(\Omega)\bigr)}=\|{\bf A}^{\frac{\rm T}{2}}\hat{\mathfrak{u}}\|_E
&=&\|{\bf A}^{\frac{\rm T}{2}}({\bf G}+\nu {\bf M})^{-1}{\bf U}\hat{\mathfrak{a}}\|_E\\
&\le&\|{\bf A}^{\frac{\rm T}{2}}({\bf G}+\nu {\bf M})^{-1}{\bf U}^{\frac{1}{2}}\|_E\|{\bf U}^{\frac{\rm T}{2}}\hat{\mathfrak{a}}\|_E\\
&=&\hat{\eta}\Norm{\hat{\alpha}_h^k}{L^2\bigl(J;L^2(\Omega)\bigr)}\\
&\le&\hat{\eta}\Norm{f}{L^2\bigl(J;L^2(\Omega)\bigr)},
\end{eqnarray*}
which enables us the $V^1L^2$ estimates for the scheme \Ref{hatQ}.

For $\nu=1$, $\nu=0.1$ and $\nu=0.01$ in $\Omega=(0,1)$ and $J=(0,1)$, Table \ref{table1-1} and \ref{table1-2} show verified results of $\eta$ and $\hat{\eta}$, respectively. 
By the verified computing results, we can conclude that the projection $\hat{Q}_h^k$ is not $V^1$-stable, and our proposed projection $Q_h^k$ satisfies $V^1$-stability as well as it has $L^2H^1_0$-stability.
\begin{table}[ht]
\begin{center}
\caption{The numerical results $\eta$ in $\Omega=(0,1)$, $J=(0,1)$. }
\begin{tabular}{|c||c|c|c||c|c|c||c|c|c||}
\hline
&\multicolumn{3}{|c||}{$\nu=1$}&\multicolumn{3}{|c||}{$\nu=0.1$}&\multicolumn{3}{|c||}{$\nu=0.01$}\\\cline{2-10}
&\multicolumn{3}{|c||}{$h$}&\multicolumn{3}{|c||}{$h$}&\multicolumn{3}{|c||}{$h$}\\
$k$&1/5&1/10&1/20&1/5&1/10&1/20&1/5&1/10&1/20\\
\hline
\hline
1/40&0.3014&0.3047&0.3055&1.4026&1.3968&1.3953&4.6191&4.6606&4.6532\\
1/80&0.3014&0.3046&0.3054&1.4027&1.3968&1.3953&4.6192&4.6607&4.6534\\
1/120&0.3014&0.3046&0.3054&1.4027&1.3969&1.3954&4.6192&4.6608&4.6535\\
1/160&0.3014&0.3046&0.3054&1.4027&1.3969&1.3954&4.6192&4.6608&4.6535\\
1/200&0.3014&0.3046&0.3054&1.4027&1.3969&1.3954&4.6192&4.6608&4.6535\\
1/240&0.3014&0.3046&0.3054&1.4027&1.3969&1.3954&4.6192&4.6608&4.6535\\
1/280&0.3014&0.3046&0.3054&1.4027&1.3969&1.3954&4.6192&4.6608&4.6535\\
1/320&0.3014&0.3046&0.3054&1.4028&1.3969&1.3954&4.6193&4.6608&4.6535\\
1/360&0.3014&0.3046&0.3054&1.4028&1.3969&1.3954&4.6193&4.6608&4.6535\\
1/400&0.3014&0.3046&0.3054&1.4028&1.3969&1.3954&4.6193&4.6608&4.6536\\
\hline
\end{tabular}
\label{table1-1} 
\caption{The numerical results $\hat{\eta}$ in $\Omega=(0,1)$, $J=(0,1)$. }
\begin{tabular}{|c||c|c|c||c|c|c||c|c|c||}
\hline
&\multicolumn{3}{|c||}{$\nu=1$}&\multicolumn{3}{|c||}{$\nu=0.1$}&\multicolumn{3}{|c||}{$\nu=0.01$}\\\cline{2-10}
&\multicolumn{3}{|c||}{$h$}&\multicolumn{3}{|c||}{$h$}&\multicolumn{3}{|c||}{$h$}\\
$k$&1/5&1/10&1/20&1/5&1/10&1/20&1/5&1/10&1/20\\
\hline
\hline
1/40&10.92&11.12&11.16&25.75&25.83&25.85&29.01&29.03&29.03\\
1/80&21.86&22.25&22.34&51.49&51.66&51.69&58.01&58.03&58.04\\
1/120&32.80&33.37&33.52&77.24&77.48&77.54&87.01&87.04&87.05\\
1/160&43.74&44.50&44.69&102.98&103.31&103.39&116.01&116.05&116.06\\
1/200&54.67&55.63&55.87&128.73&129.14&129.23&145.01&145.07&145.08\\
1/240&65.61&66.75&67.04&154.48&154.96&155.08&174.02&174.08&174.09\\
1/280&76.55&77.88&78.21&180.22&180.79&180.93&203.02&203.09&203.11\\
1/320&87.48&89.00&89.39&205.97&206.61&206.77&232.02&232.11&232.12\\
1/360&98.42&100.13&100.56&231.72&232.44&232.62&261.03&261.12&261.14\\
1/400&109.35&111.26&111.73&257.46&258.27&258.47&290.03&290.13&290.15\\
\hline
\end{tabular}
\label{table1-2} 
\end{center}
\end{table}  
\begin{remark}\label{re:constant}
All computations in Tables are carried out on the Dell Precision 5820 Intel Xeon CPU 4.0GHz by using INTLAB, a tool box in MATLAB developed by Rump \cite{Rump INTLAB} for self-validating algorithms. Therefore, all numerical values in these tables are verified data in the sense of strictly rounding error control.
Moreover, we take the basis of finite element subspaces $S_h(\Omega)$ and $V_k^1(J)$ are taken as P1-function with uniform mesh on $\Omega$ and $J$, respectively. 
\end{remark}
\setcounter{equation}{0}
\section{Constructive error estimates}
In this section, we consider a constructive error estimates of the projection $Q_h^k$ for the finite element approximation.
For an arbitrary  $u \in V(\Omega,J) \cap L^2\bigl(J;X(\Omega)\bigr)$, we define the projection $\bar{P}_h^ku\in S_h^k(\Omega,J)$ satisfying the following weak form:
\begin{eqnarray}\label{barPhk}
\Inner{\Dev{}{t}\bar{P}_h^ku}{\Dev{}{t}v_h^k}{L^2\bigl(J;L^2(\Omega)\bigr)}+\nu\Inner{\nabla P_h u}{\Dev{}{t}\nabla v_h^k}{L^2\bigl(J;L^2(\Omega)\bigr)^d}
=\Inner{\Delta_tu}{\Dev{}{t}v_h^k}{L^2\bigl(J;L^2(\Omega)\bigr)},
\end{eqnarray}
for all $v_h^k\in S_h^k(\Omega,J)$. 
Note that, for a fixed $v_h^k\in S_h^k(\Omega,J)$, by taking $v_h$ as $v_h=\Dev{}{t}v_h^k$ in \Ref{eq:IPP:Ph def}, we have
\begin{eqnarray}\label{extend-Phk}
  \Inner{\Dev{}{t}P_hu}{\Dev{}{t}v_h^k}{L^2\bigl(J;L^2(\Omega)\bigr)} + \nu\Inner{\nabla P_hu}{\Dev{}{t}v_h^k}{L^2\bigl(J;L^2(\Omega)\bigr)^d} = \Inner{\Delta_tu}{\Dev{}{t}v_h^k}{L^2\bigl(J;L^2(\Omega)\bigr)}. 
\end{eqnarray}
From \Ref{barPhk} and \Ref{extend-Phk}, we obtain
\begin{eqnarray}\label{P1}
  \Inner{\Dev{}{t}P_hu}{\Dev{}{t}v_h^k}{L^2\bigl(J;L^2(\Omega)\bigr)}= \Inner{\Dev{}{t}\bar{P}_h^ku}{\Dev{}{t}v_h^k}{L^2\bigl(J;L^2(\Omega)\bigr)}. 
\end{eqnarray}
Moreover, from the definition of $V^1$-projection, we have
\begin{eqnarray}\label{P2}
  \Inner{\Dev{}{t}P_hu}{\Dev{}{t}v_h^k}{L^2\bigl(J;L^2(\Omega)\bigr)}= \Inner{\Dev{}{t}P_1^kP_hu}{\Dev{}{t}v_h^k}{L^2\bigl(J;L^2(\Omega)\bigr)}. 
\end{eqnarray}
From \Ref{P1} and \Ref{P2}, it follows that $\bar{P}_h^k=P_1^kP_h$ because $P_1^kP_hu\in S_h^k(\Omega,J)$.
\begin{remark}\cite[Lemma 2.2]{Kinoshita 2011}\label{re:p1}
If $V_k^1(J)$ is the  P1-finite element space, then $P^k_1$ coincides with $\Pi_k$, it follows that $\bar{P}_h^k=P_h^k(=\Pi_kP_h)$.
\end{remark}
For the projection $Q_h^k$, from the triangle inequality, we have
\begin{eqnarray}
  \Norm{u-Q_h^ku}{L^2\bigl(J;H_0^1(\Omega)\bigr)}
    &\leq&\Norm{u-P_h^ku}{L^2\bigl(J;H_0^1(\Omega)\bigr)}+\Norm{P_h^ku-Q_h^ku}{L^2\bigl(J;H_0^1(\Omega)\bigr)},\label{tC1hk}\\
  \Norm{u-Q_h^ku}{L^2\bigl(J;L^2(\Omega)\bigr)}
    &\leq&\Norm{u-P_h^ku}{L^2\bigl(J;L^2(\Omega)\bigr)}+\Norm{P_h^ku-Q_h^ku}{L^2\bigl(J;L^2(\Omega)\bigr)},\label{tC0hk}\\
  \Norm{u(T)-Q_h^ku(T)}{L^2(\Omega)}
    &\leq&\Norm{u(T)-P_hu(T)}{L^2(\Omega)}+\Norm{P_h^ku(T)-Q_h^ku(T)}{L^2(\Omega)},\label{tc0hk}
\end{eqnarray}
where we have used the fact that $P_h^ku(T)=\Pi_kP_hu(T)=P_hu(T)$.
Thus we now present the estimation for $P_h^ku-Q_h^ku\in S_h^k(\Omega,J)$ below.

From \Ref{appQ} and \Ref{barPhk}, and letting $\delta_h^k:=\bar{P}_h^ku-Q_h^ku\in S_h^k(\Omega,J)$, we can obtain
\begin{eqnarray}\label{second-error}
a_0(\delta_h^k,v_h^k)
=\nu\Inner{\nabla \xi}{\Dev{}{t}\nabla v_h^k}{L^2\bigl(J;L^2(\Omega)\bigr)^d}
\end{eqnarray}
where $\xi:=\bar{P}_h^ku-P_h u\in V$.
Moreover, we define $\beta_h^k\in S_h^k(\Omega,J)$ satisfying 
$$
\Inner{\Dev{}{t}\nabla \beta_h^k}{\Dev{}{t}\nabla v_h^k}{L^2\bigl(J;L^2(\Omega)\bigr)^d}=\Inner{\nabla \xi}{\Dev{}{t}\nabla v_h^k}{L^2\bigl(J;L^2(\Omega)\bigr)^d},\quad  \forall v_h^k \in S_h^k(\Omega,J).
$$
Note that by taking $v_h^k=\beta_h^k$ on the above, it follows that $\Norm{\Dev{}{t}\beta_h^k}{L^2\bigl(J;H^1_0(\Omega)\bigr)}\le\Norm{\xi}{L^2\bigl(J;H^1_0(\Omega)\bigr)}$.
Then we obtain
\begin{eqnarray}\label{mat-error}
a_0(\delta_h^k,v_h^k)
=\nu\Inner{\Dev{}{t}\nabla \beta_h^k}{\Dev{}{t}\nabla v_h^k}{L^2\bigl(J;L^2(\Omega)\bigr)^d}
\end{eqnarray}
We now define the matrices ${\bf W}$ and ${\bf Y}$ in $\mathbb{R}^{mn \times mn}$ by 
$$
{\bf W}_{i,j}:=\Inner{\Dev{}{t}\nabla\varphi_j}{\Dev{}{t}\nabla\varphi_i}{L^2\bigl(J;L^2(\Omega)\bigr)^d},\quad 
{\bf Y}_{i,j}:=\Inner{\varphi_j(\cdot,T)}{\varphi_i(\cdot,T)}{L^2(\Omega)},\quad
\forall i,j \in \{1,\ldots,mn\},
$$
respectively.
Since the matrix ${\bf W}$ is symmetric and positive definite, we can denote the Cholesky decomposition as ${\bf W}={\bf W}^{\frac{1}{2}}{\bf W}^{\frac{\rm T}{2}}$.
Moreover, since the matrix ${\bf Y}$ is symmetric and positive semi-definite, we can decompose it as ${\bf Y}={\bf Y}^{\frac{1}{2}}{\bf Y}^{\frac{\rm T}{2}}$.
From the fact that $\delta_h^k$ and $\beta_h^k$ in $S_h^k(\Omega,J)$, there exist coefficient vectors $\mathfrak{d}:=(\mathfrak{d}_1,\ldots,\mathfrak{d}_{mn})^{\rm T}$ and $\mathfrak{b}:=(\mathfrak{b}_1,\ldots,\mathfrak{b}_{mn})^{\rm T}$ in $\mathbb{R}^{mn}$ such that
$\delta_h^k= \sum_{i=1}^{mn} \mathfrak{d}_i\varphi_i = \varphi^{\rm T}\mathfrak{d}$ and $\beta_h^k= \sum_{i=1}^{mn} \mathfrak{b}_i\varphi_i = \varphi^{\rm T}\mathfrak{b}$.
Then, the variational equation \Ref {mat-error} is equivalent to the following.
\begin{eqnarray}\label{Mat-e}
({\bf A}+\nu {\bf B})\mathfrak{d}=\nu {\bf W}\mathfrak{b}.
\end{eqnarray}

Let
\begin{eqnarray*}
\gamma_1&:=&\nu\|{\bf M}^{\frac{\rm T}{2}}({\bf A}+\nu {\bf B})^{-1}{\bf W}^{\frac{1}{2}}\|_E,\\
\gamma_0&:=&\nu\|{\bf U}^{\frac{\rm T}{2}}({\bf A}+\nu {\bf B})^{-1}{\bf W}^{\frac{1}{2}}\|_E,\\
\gamma_T&:=&\nu\|{\bf Y}^{\frac{\rm T}{2}}({\bf A}+\nu {\bf B})^{-1}{\bf W}^{\frac{1}{2}}\|_E.
\end{eqnarray*}
Then we have the following main result in this paper.
\begin{theorem}\label{thm:error}
Assume that $V_k^1(J)$ is the  P1 finite element space.
Let $u\in V(\Omega,J) \cap L^2\bigl(J;X(\Omega)\bigr)$ be a solution of \Ref{parabolic} for $f\in L^2\bigl(J;L^2(\Omega)\bigr)$.
Then, we have the following estimations.
\begin{eqnarray*}
  \Norm{u-Q_h^ku}{L^2\bigl(J;H_0^1(\Omega)\bigr)}
    &\leq& \tilde{C}_1(h,k)\Norm{f}{L^2\bigl(J;L^2(\Omega)\bigr)},\\
  \Norm{u-Q_h^ku}{L^2\bigl(J;L^2(\Omega)\bigr)}
    &\leq& \tilde{C}_0(h,k)\Norm{f}{L^2\bigl(J;L^2(\Omega)\bigr)},\\
  \Norm{u(T)-Q_h^ku(T)}{L^2(\Omega)}
    &\leq& \tilde{c}_0(h)\Norm{f}{L^2\bigl(J;L^2(\Omega)\bigr)},
\end{eqnarray*}
where 
\begin{eqnarray*}
\tilde{C}_1(h,k)&\equiv&C_1(h,k)+C_J(k)C_{inv}(h)\gamma_1,\\
\tilde{C}_0(h,k)&\equiv&C_0(h,k)+C_J(k)C_{inv}(h)\gamma_0,\\
\tilde{c}_0(h,k)&\equiv&c_0(h)+C_J(k)C_{inv}(h)\gamma_T.
\end{eqnarray*}
\end{theorem}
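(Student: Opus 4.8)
The plan is to bound each of the three errors by the split already recorded in \Ref{tC1hk}--\Ref{tc0hk}, namely into the projection error $u - P_h^ku$ and the consistency error $P_h^ku - Q_h^ku$. The first terms are controlled directly by Theorem \ref{error-estimate-base}, contributing $C_1(h,k)$, $C_0(h,k)$ and $c_0(h)$. Since $V_k^1(J)$ is a P1 space, Remark \ref{re:p1} gives $P_h^ku = \bar{P}_h^ku$, so the consistency error equals $\delta_h^k = \bar{P}_h^ku - Q_h^ku$, and everything reduces to estimating the three norms $\Norm{\delta_h^k}{L^2 H_0^1}$, $\Norm{\delta_h^k}{L^2 L^2}$ and $\Norm{\delta_h^k(\cdot,T)}{L^2(\Omega)}$.

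For these I would pass to the coefficient representation $\delta_h^k = \varphi^{\rm T}\mathfrak{d}$. Because ${\bf M}$, ${\bf U}$ and ${\bf Y}$ are precisely the Gram matrices of the corresponding inner products, the three norms equal $\|{\bf M}^{\frac{\rm T}{2}}\mathfrak{d}\|_E$, $\|{\bf U}^{\frac{\rm T}{2}}\mathfrak{d}\|_E$ and $\|{\bf Y}^{\frac{\rm T}{2}}\mathfrak{d}\|_E$. Substituting $\mathfrak{d} = \nu({\bf A}+\nu{\bf B})^{-1}{\bf W}\mathfrak{b}$ from \Ref{Mat-e}, inserting ${\bf W} = {\bf W}^{\frac{1}{2}}{\bf W}^{\frac{\rm T}{2}}$, and applying submultiplicativity of $\|\cdot\|_E$, each norm factors as a matrix $E$-norm times $\|{\bf W}^{\frac{\rm T}{2}}\mathfrak{b}\|_E$. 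By the definitions of $\gamma_1,\gamma_0,\gamma_T$ and the identity $\|{\bf W}^{\frac{\rm T}{2}}\mathfrak{b}\|_E = \Norm{\Dev{}{t}\beta_h^k}{L^2 H_0^1}$, this yields $\Norm{\delta_h^k}{L^2 H_0^1} \le \gamma_1 \Norm{\Dev{}{t}\beta_h^k}{L^2 H_0^1}$, and likewise with $\gamma_0,\gamma_T$ for the other two. The bound $\Norm{\Dev{}{t}\beta_h^k}{L^2 H_0^1} \le \Norm{\xi}{L^2 H_0^1}$ established just before \Ref{mat-error} then reduces all three estimates to the single quantity $\Norm{\xi}{L^2 H_0^1}$, where $\xi = \bar{P}_h^ku - P_hu = \Pi_k P_hu - P_hu$.

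The crux is therefore to show $\Norm{\xi}{L^2 H_0^1} \le C_J(k)\Cinv(h)\Norm{f}{L^2 L^2}$, which I would carry out in three moves. First, $\xi$ is the nodal-in-time interpolation error of $P_hu$, and since $\Pi_k$ commutes with the spatial gradient, I apply the $C_J(k)$ interpolation estimate in the time variable slicewise in $x$ to each component of $\nabla P_hu$ (each slice lies in $V^1(J)$ because $P_hu(\cdot,0)=0$), obtaining $\Norm{\xi}{L^2 H_0^1} \le C_J(k)\Norm{\Dev{}{t}P_hu}{L^2 H_0^1}$. Second, as $\Dev{}{t}P_hu(t) \in S_h(\Omega)$ for a.e.\ $t$, the inverse estimate gives $\Norm{\Dev{}{t}P_hu}{L^2 H_0^1} \le \Cinv(h)\Norm{\Dev{}{t}P_hu}{L^2 L^2} = \Cinv(h)\Norm{P_hu}{V^1 L^2}$. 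Third---the step I expect to be the main obstacle---I must establish the semidiscrete stability $\Norm{P_hu}{V^1 L^2} \le \Norm{f}{L^2 L^2}$. For this I would first combine \Ref{eq:IPP:Ph def} with the weak form of \Ref{parabolic} (using $\Delta_tu = f$) to show that $P_hu$ solves the semidiscrete heat equation $\Inner{\Dev{}{t}P_hu}{v_h}{L^2(\Omega)} + \nu\Inner{\nabla P_hu}{\nabla v_h}{L^2(\Omega)^d} = \Inner{f}{v_h}{L^2(\Omega)}$ for all $v_h \in S_h(\Omega)$ and a.e.\ $t$; then test with $v_h = \Dev{}{t}P_hu(t)$, use $\nu\Inner{\nabla P_hu}{\Dev{}{t}\nabla P_hu}{L^2(\Omega)^d} = \frac{\nu}{2}\Dev{}{t}\Norm{P_hu}{H_0^1(\Omega)}^2$, integrate over $J$, drop the nonnegative endpoint term at $t=T$ (the $t=0$ term vanishes since $P_hu(0)=0$), and close by Cauchy--Schwarz.

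Chaining these inequalities gives $\Norm{P_h^ku - Q_h^ku}{L^2 H_0^1} \le C_J(k)\Cinv(h)\gamma_1\Norm{f}{L^2 L^2}$, and the analogous bounds with $\gamma_0$ and $\gamma_T$ for the $L^2 L^2$ and endpoint norms; adding the projection-error contributions from Theorem \ref{error-estimate-base} through \Ref{tC1hk}--\Ref{tc0hk} produces exactly $\tilde{C}_1$, $\tilde{C}_0$ and $\tilde{c}_0$. Beyond the semidiscrete stability, the subtlety to watch is the tensor-product bookkeeping in the first two moves, namely that the time interpolation commutes with $\nabla$ and that the inverse estimate may legitimately be applied slicewise in $t$.
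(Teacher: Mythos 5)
Your proposal is correct and follows essentially the same route as the paper: the triangle-inequality split \Ref{tC1hk}--\Ref{tc0hk}, the matrix-norm bounds via \Ref{Mat-e} yielding $\gamma_1,\gamma_0,\gamma_T$, the reduction to $\Norm{\xi}{L^2\bigl(J;H_0^1(\Omega)\bigr)}$ through $\beta_h^k$, and the final bound $C_J(k)\Cinv(h)\Norm{f}{L^2\bigl(J;L^2(\Omega)\bigr)}$. The only differences are cosmetic: you apply the time-interpolation estimate before the inverse estimate (the paper does the reverse order, giving the same constant), and you prove the semidiscrete stability $\Norm{P_hu}{V^1\bigl(J;L^2(\Omega)\bigr)}\le\Norm{f}{L^2\bigl(J;L^2(\Omega)\bigr)}$ by testing with $\Dev{}{t}P_hu$ rather than citing \cite{Nakao-JCAM}, which is exactly the argument behind that citation.
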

\begin{proof}
From \Ref{Mat-e}, we can obtain
\begin{eqnarray*}
\Norm{\delta_h^k}{L^2\bigl(J;H^1_0(\Omega)\bigr)}
&=&\|{\bf M}^{\frac{\rm T}{2}}\mathfrak{d}\|_E
=\nu\|{\bf M}^{\frac{\rm T}{2}}({\bf A}+\nu {\bf B})^{-1}{\bf W}\mathfrak{b}\|_E
\le\gamma_1\|{\bf W}^{\frac{\rm T}{2}}\mathfrak{b}\|_E\\
\Norm{\delta_h^k}{L^2\bigl(J;L^2(\Omega)\bigr)}
&=&\|{\bf U}^{\frac{\rm T}{2}}\mathfrak{d}\|_E
=\nu\|{\bf U}^{\frac{\rm T}{2}}({\bf A}+\nu {\bf B})^{-1}{\bf W}\mathfrak{b}\|_E
\le\gamma_0\|{\bf W}^{\frac{\rm T}{2}}\mathfrak{b}\|_E,\\
\Norm{\delta_h^k(T)}{L^2(\Omega)}
&=&\|{\bf Y}^{\frac{\rm T}{2}}\mathfrak{d}\|_E
=\nu\|{\bf Y}^{\frac{\rm T}{2}}({\bf A}+\nu {\bf B})^{-1}{\bf W}\mathfrak{b}\|_E
\le\gamma_T\|{\bf W}^{\frac{\rm T}{2}}\mathfrak{b}\|_E.
\end{eqnarray*}
Moreover, we have
$$
\|{\bf W}^{\frac{\rm T}{2}}\mathfrak{b}\|_E=\Norm{\Dev{}{t}\beta_h^k}{L^2\bigl(J;H^1_0(\Omega)\bigr)}\le\Norm{\xi}{L^2\bigl(J;H^1_0(\Omega)\bigr)}.
$$
Note that $\bar{P}_h^k=P_h^k(=\Pi_kP_h)$ from Remark \ref{re:p1}. Then it follows that
\begin{eqnarray*}
\Norm{\xi}{L^2\bigl(J;H^1_0(\Omega)\bigr)}
=\Norm{\bar{P}_h^ku-P_h u}{L^2\bigl(J;H^1_0(\Omega)\bigr)}
&=&\Norm{\Pi_kP_hu-P_h u}{L^2\bigl(J;H^1_0(\Omega)\bigr)}\\
&\le&C_{inv}(h)\Norm{\Pi_kP_hu-P_h u}{L^2\bigl(J;L^2(\Omega)\bigr)}\\
&\le&C_J(k)C_{inv}(h)\Norm{P_hu}{V^1\bigl(J;L^2(\Omega)\bigr)}\\
&\le&C_J(k)C_{inv}(h)\Norm{f}{L^2\bigl(J;L^2(\Omega)\bigr)},
\end{eqnarray*}
where we have used the fact that $\Norm{P_hu}{V^1\bigl(J;L^2(\Omega)\bigr)}\le\Norm{f}{L^2\bigl(J;L^2(\Omega)\bigr)}$ in \cite{Nakao-JCAM}.
Therefore, the proof is completed from \Ref{tC1hk}, \Ref{tC0hk}, \Ref{tc0hk}, Theorem \ref{error-estimate-base} and the fact $\delta_h^k=P_h^ku-Q_h^ku$.
\end{proof}

The same assumptions in Remark \ref{re:constant}, Table \ref{table2-1}, \ref{table2-2}  and \ref{table2-3} show verified results of  $\gamma_1$, $\gamma_0$ and $\gamma_T$ for $\nu=1$, $\nu=0.1$ and $\nu=0.01$ in $\Omega=(0,1)$ and $J=(0,1)$.
From the verified results in Table \ref{table2-1}, \ref{table2-2}  and \ref{table2-3},  we may conclude that $\gamma_0$ and $\gamma_T$ are dependent on the parameter $\nu$ more clearly than $\gamma_1$, but asymptotically converge to some fixed constants when $h$ and $k$ tend to zero.
\begin{table}[ht]
\caption{The numericai results in $\Omega=(0,1)$, $J=(0,1)$. }
\footnotesize
\begin{tabular}{|c||c|c|c||c|c|c||c|c|c||}
\hline
$\nu=1$&\multicolumn{3}{|c||}{$h=1/5$}&\multicolumn{3}{|c||}{$h=1/10$}&\multicolumn{3}{|c||}{$h=1/20$}\\
\hline
$k$&$\gamma_1$&$\gamma_0$&$\gamma_T$&$\gamma_1$&$\gamma_0$&$\gamma_T$&$\gamma_1$&$\gamma_0$&$\gamma_T$\\
\hline
\hline
1/40&1.6381&0.3014&0.7071&7.2951&0.3047&0.7071&18.2519&0.3055&0.7071\\
1/80&0.9999&0.3014&0.7071&3.9947&0.3046&0.7071&15.2511&0.3054&0.7071\\
1/120&0.9999&0.3014&0.7071&2.6805&0.3046&0.7071&11.0305&0.3054&0.7071\\
1/160&0.9999&0.3014&0.7071&2.0126&0.3046&0.7071&8.4231&0.3054&0.7071\\
1/200&0.9999&0.3014&0.7071&1.6105&0.3046&0.7071&6.7750&0.3054&0.7071\\
1/240&0.9999&0.3014&0.7071&1.3422&0.3046&0.7071&5.6573&0.3054&0.7071\\
1/280&0.9999&0.3014&0.7071&1.1505&0.3046&0.7071&4.8534&0.3054&0.7071\\
1/320&0.9999&0.3014&0.7071&1.0067&0.3046&0.7071&4.2486&0.3054&0.7071\\
1/360&0.9999&0.3014&0.7071&1.0000&0.3046&0.7071&3.7775&0.3054&0.7071\\
1/400&0.9999&0.3014&0.7071&1.0000&0.3046&0.7071&3.4003&0.3054&0.7072\\
\hline
\end{tabular}
\\
$
(k,\gamma_1)=(1/500,2.7210), 
(k,\gamma_1)=(1/700,1.9451),
(k,\gamma_1)=(1/900,1.5170)
$
for $h=1/20$.
\label{table2-1}
\caption{The numericai results in $\Omega=(0,1)$, $J=(0,1)$. }
\begin{tabular}{|c||c|c|c||c|c|c||c|c|c||}
\hline
$\nu=0.1$&\multicolumn{3}{|c||}{$h=1/5$}&\multicolumn{3}{|c||}{$h=1/10$}&\multicolumn{3}{|c||}{$h=1/20$}\\
\hline
$k$&$\gamma_1$&$\gamma_0$&$\gamma_T$&$\gamma_1$&$\gamma_0$&$\gamma_T$&$\gamma_1$&$\gamma_0$&$\gamma_T$\\
\hline
\hline
1/40&0.9915&0.1402&0.2236&0.9998&0.1396&0.2236&3.3302&0.1395&0.2236\\
1/80&0.9914&0.1402&0.2236&0.9996&0.1396&0.2236&1.6986&0.1395&0.2236\\
1/120&0.9914&0.1402&0.2236&0.9996&0.1396&0.2236&1.1335&0.1395&0.2236\\
1/160&0.9914&0.1402&0.2236&0.9996&0.1396&0.2236&0.9999&0.1395&0.2236\\
1/200&0.9913&0.1402&0.2236&0.9996&0.1396&0.2236&0.9999&0.1395&0.2236\\
1/240&0.9913&0.1402&0.2236&0.9996&0.1396&0.2236&1.0000&0.1395&0.2236\\
1/280&0.9913&0.1402&0.2236&0.9996&0.1396&0.2236&1.0000&0.1395&0.2236\\
1/320&0.9913&0.1402&0.2236&0.9996&0.1396&0.2236&1.0000&0.1395&0.2236\\
1/360&0.9914&0.1402&0.2236&0.9996&0.1396&0.2236&1.0000&0.1395&0.2236\\
1/400&0.9914&0.1402&0.2236&0.9996&0.1396&0.2236&1.0001&0.1395&0.2236\\
\hline
\end{tabular}
\label{table2-2} 
\caption{The numericai results in $\Omega=(0,1)$, $J=(0,1)$. }
\begin{tabular}{|c||c|c|c||c|c|c||c|c|c||}
\hline
$\nu=0.01$&\multicolumn{3}{|c||}{$h=1/5$}&\multicolumn{3}{|c||}{$h=1/10$}&\multicolumn{3}{|c||}{$h=1/20$}\\
\hline
$k$&$\gamma_1$&$\gamma_0$&$\gamma_T$&$\gamma_1$&$\gamma_0$&$\gamma_T$&$\gamma_1$&$\gamma_0$&$\gamma_T$\\
\hline
\hline
1/40&0.6972&0.0461&0.0697&0.9682&0.0466&0.0707&0.9981&0.0465&0.0707\\
1/80&0.6972&0.0461&0.0697&0.9681&0.0466&0.0707&0.9979&0.0465&0.0707\\
1/120&0.6972&0.0461&0.0697&0.9681&0.0466&0.0707&0.9979&0.0465&0.0707\\
1/160&0.6972&0.0461&0.0697&0.9681&0.0466&0.0707&0.9978&0.0465&0.0707\\
1/200&0.6972&0.0461&0.0697&0.9681&0.0466&0.0707&0.9978&0.0465&0.0707\\
1/240&0.6972&0.0461&0.0697&0.9681&0.0466&0.0707&0.9978&0.0465&0.0707\\
1/280&0.6972&0.0461&0.0697&0.9681&0.0466&0.0707&0.9978&0.0465&0.0707\\
1/320&0.6972&0.0461&0.0697&0.9681&0.0466&0.0707&0.9978&0.0465&0.0707\\
1/360&0.6972&0.0461&0.0697&0.9681&0.0466&0.0707&0.9979&0.0465&0.0707\\
1/400&0.6972&0.0461&0.0697&0.9681&0.0466&0.0707&0.9979&0.0465&0.0707\\
\hline
\end{tabular}
\label{table2-3} 
\end{table}  
\setcounter{equation}{0}
\section{Conclusion}
We presented a new full-discrete finite element projection $Q_h^k$ for the heat equation, and derived the constructive stability by the numerical computations with guaranteed accuracy.
Our scheme is closely related to that in \cite{Nakao-SIAM} in the sense that the constructive error estimates is established by using the results obtained by the same paper.
Therefore, it is considered as an another version of \cite{Nakao-SIAM}, but the present scheme should be more familiar method to researchers working on numerical analysis.
Namely, it is not necessary any complicated manipulation for the verified computation of matrix function.

Particularly, the estimate $\Norm{u(T)-Q_h^ku(T)}{L^2(\Omega)}$ should be useful to the verified computation for nonlinear problems by the time-evolutional method (cf.\cite{Nakao-NM}), which will be presented in our forthcoming paper.
Thus, our method will play an important role in the numerical verification method to find exact solutions for the nonlinear parabolic equations.


\begin{thebibliography}{99}
\bibitem{Kinoshita 2011}
T. Kinoshita, T. Kimura, and M. T. Nakao,
A posteriori estimates of inverse operators for initial value problems in linear ordinary differential equations,
{\sl J. Comput. Appl. Math.}, {\bf 236} (2011), pp.~1622--1636.
\bibitem{Nakao-NM}
T. Kinoshita, T. Kimura, M. T. Nakao, 
On the a posteriori estimates for inverse operators of linear parabolic equations with applications to the numerical enclosure of solutions for nonlinear problems, 
{\sl Numerische Mathematik}, {\bf 126} (2014), pp. 679--701.
\bibitem{Nakao-JCAM}
M.T.Nakao, 
Solving nonlinear parabolic problems with result verification. Part I: One-spacedimensional case, 
{\sl J. Comput. Appl. Math.}, {\bf 3} (1991), 323--334.
\bibitem{Nakao 1998 best}
M. T. Nakao, N. Yamamoto, and S. Kimura,
{On the Best Constant in the Error Bound for the $H_0^1$-Projection into Piecewise Polynomial Spaces},
{\sl J. Approx. Theory}, {\bf 93} (1998), pp.~491--500.
\bibitem{Nakao-SIAM}
M.T.Nakao, T.Kimura, T.Kinoshita, 
Constructive a priori error estimates for a full discrete approximation of the heat equation, 
{\sl SIAM J. Numer. Anal.}, {\bf 51}, 3 (2013), 1525--1541.
\bibitem{Rump INTLAB}
{S. M. Rump},
{\sl INTLAB--INTerval LABoratory}, in {Developments in Reliable Computing, Tibor Csendes(ed.)}, \textit{Kluwer Academic Publishers}, Dordrecht, 1999, pp. 77--104. http://www.ti3.tu-harburg.de/rump/intlab/
\bibitem{Schultz 1973}
M. H. Schultz,
{\sl Spline Analysis},
{Prentice-Hall, Englewood Cliffs, New Jersey}, 1973.
\end{thebibliography}
\end{document}